%deformation of moebius structures
\documentclass[11pt,twoside,a4paper]{article}
\usepackage{amsmath,amssymb,amsthm}
\usepackage{hyperref}
\usepackage{graphicx,psfrag}

\newtheorem{thm}{Theorem}[section]
\newtheorem{lem}[thm]{Lemma}
\newtheorem{pro}[thm]{Proposition}

\theoremstyle{definition}
\newtheorem{exa}[thm]{Example}

\theoremstyle{remark}
\newtheorem{rem}[thm]{Remark}

\newcommand{\R}{\mathbb{R}}

\newcommand{\C}{\mathbb{C}}

\newcommand{\cM}{\mathcal{M}}

\newcommand{\cP}{\mathcal{P}}

\newcommand{\cR}{\mathcal{R}}

\newcommand{\al}{\alpha}
\newcommand{\be}{\beta}

\newcommand{\de}{\delta}

\newcommand{\om}{\omega}

\newcommand{\si}{\sigma}

\newcommand{\la}{\lambda}
\newcommand{\La}{\Lambda}
\renewcommand{\phi}{\varphi}

\newcommand{\id}{\operatorname{id}}

\newcommand{\aut}{\operatorname{Aut}}
\newcommand{\crr}{\operatorname{cr}}
\newcommand{\cd}{\operatorname{cd}}

\newcommand{\sing}{\operatorname{sing}}
\newcommand{\reg}{\operatorname{reg}}

\newcommand{\tr}{\operatorname{Tr}}
\newcommand{\sign}{\operatorname{sign}}

\newcommand{\set}[2]{\{#1:\,\text{#2}\}}
\newcommand{\sm}{\setminus}
\newcommand{\sub}{\subset}

\newcommand{\ov}{\overline}

\newcommand{\wh}{\widehat}

\begin{document}

\title{Symmetries of cross-ratios and the equation for M\"obius structures}

\author{Sergei Buyalo\footnote{Supported by RFFI Grant 20-01-00070}}

\date{}
\maketitle

\begin{abstract} We consider orthogonal representations 
$\eta_n:S_n\curvearrowright\R^N$
of the symmetry groups
$S_n$, $n\ge 4$, 
with
$N=n!/8$
motivated by symmetries of cross-ratios. For 
$n=5$
we find the decomposition of
$\eta_5$
into irreducible components and show that one of the components gives
the solution to the equations, which describe M\"obius structures
in the class of sub-M\"obius structures. In this sense, the condition defining M\"obius
structures is hidden already in symmetries of cross-ratios.
\end{abstract}

\section{Introduction}
\label{sect:indroduction}

This note is an extension of \cite{Bu16}. We consider orthogonal representations 
$\eta_n:S_n\curvearrowright\R^N$
of the symmetry groups
$S_n$, $n\ge 4$,
with
$N=n!/8$
motivated by symmetries of cross-ratios. For 
$n=5$
we find the decomposition of
$\eta_5$
into irreducible components and show that one of the components gives
the solution to the equations obtained in \cite{Bu16}, which describe M\"obius structures
in the class of sub-M\"obius structures. In this sense, the condition defining M\"obius
structures is hidden already in symmetries of cross-ratios.

\subsection{M\"obius and sub-M\"obius structures}
\label{subsect:moeb_submoebius}

Given a set 
$X$,
we consider ordered 4-tuples
$Q\in X^4$.
Such a
$Q$
is said to be {\em admissible} if it contains at most two equal items. 
Let
$\cP_4=\cP_4(X)$
be the set of admissible 4-tuples. A 4-tuple
$Q\in X^4$
is said to be {\em nondegenerate} or {\em regular} if all its items are pairwise different.
We denote the set nondegenerate 4-tuples as
$\reg\cP_4=\reg\cP_4(X)$.

A M\"obius structure
$M$
on a set 
$X$
is a class of equivalent semi-metrics on
$X$,
where two semi-metrics in
$M$
are equivalent if and only if for any nondegenerate 4-tuple
$Q\in X^4$
cross-ratios of the semi-metrics coincide.

It is convenient to use an alternative description of a M\"obius structure
$M$
by three cross-ratios
$$Q\mapsto \crr_1(Q)=\frac{|x_1x_3||x_2x_4|}{|x_1x_4||x_2x_3|};
  \crr_2(Q)=\frac{|x_1x_4||x_2x_3|}{|x_1x_2||x_3x_4|};
  \crr_3(Q)=\frac{|x_1x_2||x_3x_4|}{|x_2x_4||x_1x_3|}$$
for 
$Q=(x_1,x_2,x_3,x_4)\in\cP_4$,
whose product equals 1, where
$|x_ix_j|=d(x_i,x_j)$
for any semi-metric
$d\in M$.
There is a reason to replace cross-ratios
$\crr_k$, $k=1,2,3$,
by their logarithms. In that way we define a M\"obius structure
$M$
on a set 
$X$
as the map 
$M:\cP_4\to\ov L_4$,
$$M(Q)=(\ln\crr_1(Q),\ln\crr_2(Q),\ln\crr_3(Q)),$$
where
$L_4\sub\R^3$
is the 2-plane given by the equation
$a+b+c=0$,
and
$\ov L_4$
is an extension of
$L_4$
by 3 infinitely remote points
$A=(0,\infty,-\infty)$, $B=(-\infty,0,\infty)$, $C=(\infty,-\infty,0)$.
The infinitely remote points are added to take into account degenerate
4-tuples
$Q\in\cP_4$.

An important feature of our approach is presence of the symmetry groups
$S_n$
of 
$n$
elements for 
$n\ge 3$.
The group
$S_4$
acts on
$\cP_4$
by entries permutations of any
$Q\in\cP_4$.
The group
$S_3$
acts on
$\ov L_4$
by signed permutations of coordinates, where a permutation
$\si:\ov L_4\to\ov L_4$
has the sign 
``$-1$'' 
if and only if
$\si$
is odd. The map 
$M$
is equivariant with respect to the signed cross-ratio homomorphism,
\begin{equation}\label{eq:signed_cross-ratio_homomorphism}
M(\pi(Q))=\sign(\pi)\phi(\pi)M(Q) 
\end{equation}
for every
$Q\in\cP_4$, $\pi\in S_4$,
where
$\phi:S_4\to S_3$
is the cross-ratio homomorphism, see sect.~\ref{subsect:cross-ratio_homomorphism}.

A {\em sub-M\"obius structure} on
$X$
is a map 
$M:\cP_4\to\ov L_4$
with the basic property (\ref{eq:signed_cross-ratio_homomorphism}) and
a natural condition for values on degenerate admissible 4-tuples,
see sect.~\ref{subsect:submoeb_structures}. This notion is introduced
in \cite{Bu16}, where its importance is demonstrated by showing that on
the boundary at infinity 
$X$
of every Gromov hyperbolic space
$Y$
there is a canonical sub-M\"obius structure
$M$
invariant under isometries of
$Y$
such that the 
$M$-topology
on
$X$
coincides with the standard one. 

The main result of \cite{Bu16} is a criterion under which a sub-M\"obius
structure on
$X$
is a M\"obius structure, that is, generated by semi-metrics. This criterion
is expressed as linear equations (A), (B), see Theorem~\ref{thm:submoeb_moeb},
for the codifferential
$\de M:\cP_5\to\ov L_5$
defined on admissible 5-tuples, see sect.~\ref{subsect:def_codiff}. 
In other words, to check whether
of a sub-M\"obius structure is a M\"obius one, one needs to consider its natural 
extension to admissible 5-tuples. The linearity of the equations (A), (B) 
is the main reason why we take logarithms in the definition of M\"obius structures.

\subsection{$S_5$-symmetry and its irreducible components}

The group
$S_5$
acts by entries permutations of the space
$\cP_5\sub X^5$
of admissible 5-tuples. We define a natural representation
$\eta_5:S_5\curvearrowright V^5$
of
$S_5$
on a 15-dimensional space
$V^5=\R^5\otimes V^4$, 
where
$V^4:=\R^3\supset L_4$,
see sect.~\ref{subsect:canonical_action_S^n}, in a way that 
$\eta_5$
describe the symmetry of any sub-M\"obius structure 
$M$
regarded on admissible 5-tuples, that is, the codifferential
$\de M$
is 
$\eta_5$-equivariant,
$$\de M(\pi P)=\eta_5(\pi)\de M(P)$$
for every
$P\in\cP_5$
and every
$\pi\in S_5$.
In sect.~\ref{subsect:decomposition_eta_5_irreducible} we describe
the decomposition of
$\eta_5$
into irreducible components,
$$\psi=\chi^{32}+\chi^{2^21}+\chi^{21^3}+\chi^{1^5},$$
where
$\psi:S_5\to\C$
is the character of
$\eta_5$
and
$\chi^{\{\}}$
are prime characters of
$S_5$.

On the other hand, in sect.~\ref{sect:characteristic_functions}
we find an
$\eta_5$-invariant
version of the equations (A), (B), that is,
$\eta_5$-invariant
subspace
$\wh R\sub L_5$
of solutions to (A), (B). It turns out that
$\wh R$
is exactly
$\chi^{32}$-subspace
$R$
of the 
$\eta_5$-decomposition
into irreducible components, 
$\wh R=R$,
see Proposition~\ref{pro:symset_irreducible}. Since
$\dim R=5$,
we find in particular that
$\dim\wh R=5$.
This leads to the main result of this note

\begin{thm}\label{thm:moeb_irreducible} A sub-M\"obius structure
$M$
on a set 
$X$
is M\"obius if and only if
$\de M(\reg\cP_5)\sub R$,
where
$R\sub L_5$
is the irreducible component of the canonical
representation
$\eta_5$
that corresponds to the prime character
$\chi^{32}$.
\end{thm}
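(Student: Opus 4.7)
The plan is to derive Theorem~\ref{thm:moeb_irreducible} as an essentially formal consequence of two ingredients: the criterion of \cite{Bu16} (reproduced here as Theorem~\ref{thm:submoeb_moeb}), which states that a sub-M\"obius structure $M$ is M\"obius if and only if its codifferential $\delta M$ satisfies the linear equations (A), (B) on every regular 5-tuple; and the identification $\widehat R=R$ supplied by Proposition~\ref{pro:symset_irreducible}. The theorem thereby reduces to showing that the pointwise condition ``$\delta M(P)$ solves (A), (B) for every $P\in\reg\cP_5$'' is equivalent to the inclusion $\delta M(\reg\cP_5)\sub R$.

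The reverse implication is one line: since $R=\widehat R$ is by construction contained in the solution set of (A), (B), the inclusion $\delta M(\reg\cP_5)\sub R$ forces the equations to hold pointwise, and Theorem~\ref{thm:submoeb_moeb} then yields that $M$ is M\"obius. For the forward implication I would exploit the $\eta_5$-equivariance $\delta M(\pi P)=\eta_5(\pi)\delta M(P)$ together with the $S_5$-invariance of $\reg\cP_5$: if $M$ is M\"obius, then for each fixed $P\in\reg\cP_5$ the entire orbit $\{\eta_5(\pi)\delta M(P):\pi\in S_5\}$ lies in the solution set of (A), (B). The linear span of that orbit is therefore an $\eta_5$-invariant subspace of $L_5$ contained in the solution set, hence contained in the maximal such subspace $\widehat R$. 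In particular $\delta M(P)\in\widehat R=R$, giving the desired inclusion.

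The entire substantive work is packaged in Proposition~\ref{pro:symset_irreducible} and in the decomposition $\psi=\chi^{32}+\chi^{2^21}+\chi^{21^3}+\chi^{1^5}$ of $\eta_5$ into irreducibles established in Section~\ref{subsect:decomposition_eta_5_irreducible}; this is where I expect the main obstacle to lie, since it requires a genuine representation-theoretic computation together with a careful passage from the raw equations (A), (B) to their $\eta_5$-invariant version (so that $\widehat R$ really is the maximal $\eta_5$-invariant subspace of solutions). Granted those ingredients, the argument sketched above uses nothing beyond the equivariance built into the codifferential and the linearity of (A), (B).
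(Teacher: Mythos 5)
Your overall strategy coincides with the paper's: Theorem~\ref{thm:moeb_irreducible} is obtained by combining the equivalence ``$M$ is M\"obius $\Longleftrightarrow\de M(\reg\cP_5)\sub\wh R$'' with the identification $\wh R=R$ of Proposition~\ref{pro:symset_irreducible}, and your forward implication (push the whole $S_5$-orbit of $\de M(P)$ into the solution set via equivariance, then pass to its invariant linear span and use transitivity of $S_5$ on the characteristic functions to land in $\wh R$) is a repackaging of the chain (i)$\Rightarrow$(ii)$\Rightarrow$(iii) of Proposition~\ref{pro:moebius_codifferential}. You also correctly locate the substantive work in Proposition~\ref{pro:symset_irreducible} and the character computation.

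There is, however, one genuine gap in your reverse implication, stemming from a misquotation of Theorem~\ref{thm:submoeb_moeb}: that criterion requires (A), (B) for every \emph{admissible} 5-tuple $P=(x,y,A)$, not only for the nondegenerate ones, whereas the hypothesis $\de M(\reg\cP_5)\sub R=\wh R$ only gives you the equations on regular 5-tuples. The missing step is exactly Proposition~\ref{pro:submoeb_moeb_log}: for a singular admissible 5-tuple (e.g.\ $x=y$) one must verify that the characteristic identities hold automatically, using condition (c) of a sub-M\"obius structure, which forces $M(P_3)=M(P_4)=M(P_5)=(0,\infty,-\infty)$, together with the convention $(a+\infty)-(b+\infty)=0$; four of the ten identities involve no infinities and are checked directly, and the remaining ones reduce to $\infty=\infty$. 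This is a short case analysis, but it is not a formality: without it Theorem~\ref{thm:submoeb_moeb} cannot be invoked, so you should either carry out this reduction explicitly or cite it as a separate lemma rather than folding it silently into the statement of the criterion.
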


\section{Sub-M\"obius structures}
\label{sect:submoebius_structures}

\subsection{Admissible $n$-tuples and the cross-ratio homomorphism}
\label{subsect:cross-ratio_homomorphism}

Here we briefly recall what is a sub-M\"obius structure
$M$
on a set 
$X$
and what are conditions under which 
$M$
is a M\"obius structure.

Given an ordered tuple
$P=(x_1,\dots,x_k)\in X^k$
we use notation
$$P_i=(x_1,\dots,x_{i-1},x_{i+1},\dots,x_k)$$
for
$i=1,\dots,k$.
We define the set 
$\cP_n$
of the {\em admissible} 
$n$-tuples 
in
$X$, $n\ge 5$,
recurrently by asking for every
$P\in\cP_n$
the 
$(n-1)$-tuples
$P_i$, $i=1,\dots,n$,
to be admissible. The set 
$\sing\cP_n$
of 
$n$-tuples
$P\in\cP_n$
having two equal entries is called the {\em singular}
subset of
$\cP_n$.
Its complement
$\reg\cP_n=\cP_n\sm\sing\cP_n$
is called the {\em regular} subset of
$\cP_n$. 
The set
$\reg\cP_n$
consists of nondegenerate 
$n$-tuples.

The symmetry group
$S_n$
acts on
$\cP_n$
by permutations of the entries of every
$P\in\cP_n$.
We represent a permutation
$\pi\in S_n$
as
$\pi=i_1\dots i_n$
where
$i_k=\pi^{-1}(k)$, $k=1,\dots,n$.

The {\em cross-ratio} homomorphism
$\phi:S_4\to S_3$
can be described as follows: a permutation of a tetrahedron
ordered vertices 
$(1,2,3,4)$
gives rise to a permutation of opposite pairs of edges
$((12)(34),(13)(24),(14)(23))$
taken in this order. Thus the kernel 
$K$
of
$\phi$
consists of four elements 1234, 2143, 4321, 3412,
and is isomorphic to the dihedral group
$D_4$
of a square automorphisms. The group
$D_4$
is also called the {\em Klein group}.

We denote by 
$\sign:S_4\to\{\pm 1\}$
the homomorphism that associates to every odd permutation the sign 
``$-1$''.

\subsection{Sub-M\"obius structures}
\label{subsect:submoeb_structures}

We denote by
$L_4$
the subspace in
$\R^3$
given by
$a+b+c=0$
(subindex 
$4$
is related to 4-tuples rather than to the dimension of
$\R^3$).
We extend 
$L_4$
by adding to it thee points
$A=(0,\infty,-\infty)$, $B=(-\infty,0,\infty)$, $C=(\infty,-\infty,0)$,
$\ov L_4=L_4\cup\{A,B,C\}$.

A {\em sub-M\"obius structure} on a set
$X$
is given by a map
$M:\cP_4\to\ov L_4$,
which satisfies the following conditions
\begin{itemize}
 \item [(a)] $M$
is equivariant with respect to the signed cross-ratio homomorphism
$\phi$,
i.e.
$$M(\pi P)=\sign(\pi)\phi(\pi)M(P)$$
for every
$P\in\cP_4$
and every
$\pi\in S_4$.
 \item [(b)] $M(P)\in L_4$
if and only if
$P$
is nondegenerate, $P\in\reg\cP_4$;
 \item [(c)] $M(P)=(0,\infty,-\infty)$
for
$P=(x_1,x_1,x_3,x_4)\in\cP_4$.
\end{itemize}

\begin{rem}\label{rem:cone_sub_moeb} Note that if
$M$, $M'$
are sub-M\"obius structures on
$X$,
then
$M+M'$
and
$\al M$, $\al>0$,
are also sub-M\"obius structures on
$X$.
However,
$-M$
is not a sub-M\"obius structure in the sense of our definition, because
the condition (c) in this case is violated. Therefore, the set 
$\cM$
of sub-M\"obius structures on 
$X$
is a cone.
\end{rem}

A function
$d:X^2\to\wh\R=\R\cup\{\infty\}$
is called a {\em semi-metric}, if it is symmetric,
$d(x,y)=d(y,x)$
for each
$x$, $y\in X$,
positive outside of the diagonal, vanishes on the diagonal
and there is at most one infinitely remote point
$\om\in X$
for
$d$,
i.e. such that
$d(x,\om)=\infty$
for some
$x\in X\sm\{\om\}$.
In this case, we require that 
$d(x,\om)=\infty$
for all
$x\in X\sm\{\om\}$.
A metric is a semi-metric that satisfies the triangle inequality.

With every semi-metric
$d$
on
$X$
we associate the {\em M\"obius} structure
$M_d:\cP_4\to\ov L_4$
given by
$M_d(x_1,x_2,x_3,x_4)=(a,b,c)$,
where 
\begin{itemize}
 \item[] $a=\cd(x_1,x_2,x_3,x_4)=(x_1|x_4)+(x_2|x_3)-(x_1|x_3)-(x_2|x_4)$
 \item[] $b=\cd(x_1,x_3,x_4,x_2)=(x_1|x_2)+(x_3|x_4)-(x_1|x_4)-(x_2|x_3)$
 \item[] $c=\cd(x_2,x_3,x_1,x_4)=(x_2|x_4)+(x_1|x_3)-(x_1|x_2)-(x_3|x_4)$,
\end{itemize}
$(x_i|x_j)=-\ln d(x_i,x_j)$.
One easily checks that for every semi-metric
$d$
on
$X$
the M\"obius structure
$M_d$
associated with 
$d$
is a sub-M\"obius structure.

A triple
$A=(\al,\be,\om)\in X^3$
of pairwise distinct points is called a {\em scale triple}.
We use notation
$M(P_i)=(a(P_i),b(P_i),c(P_i))$, $i=1,\dots,5$, $P\in\cP_5$,
for a sub-M\"obius structure
$M$
on
$X$.
The following theorem has been proved in \cite[Theorem~3.4]{Bu16}.

\begin{thm}\label{thm:submoeb_moeb} A sub-M\"obius structure
$M$
on
$X$
is a M\"obius structure if and only if for every scale triple
$A=(\al,\be,\om)\in X^3$
and every admissible 5-tuple
$P=(x,y,A)\sub X$
the following conditions (A), (B) are satisfied
\begin{itemize}
 \item [(A)] $b(P_1)+b(P_4)=b(P_3)-a(P_1)$
for all
$y\in X$, $y\neq\al,\be$;
 \item[(B)] $b(P_2)=-a(P_4)+b(P_1)$,
where
$y\neq\al,\om$.
\end{itemize}
\end{thm}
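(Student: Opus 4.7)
The plan is to deduce the theorem from Theorem~\ref{thm:submoeb_moeb} and Proposition~\ref{pro:symset_irreducible} by symmetrizing the conditions (A), (B) under the $S_5$-action, using the $\eta_5$-equivariance of the codifferential $\de M$.

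First, I would reinterpret (A), (B) as a system of linear equations cutting out a linear subspace $U\sub L_5$. With the labeling $(x,y,\al,\be,\om)=(x_1,\dots,x_5)$ for an admissible 5-tuple $P$, a vector $v\in L_5$ whose coordinates encode the triples $(a(P_i),b(P_i),c(P_i))$, $i=1,\dots,5$, belongs to $U$ if and only if
\[
b(P_1)+b(P_4)=b(P_3)-a(P_1)\qquad\text{and}\qquad b(P_2)=-a(P_4)+b(P_1).
\]
For $P\in\reg\cP_5$ all five entries are pairwise distinct, so the side conditions $y\ne\al,\be$ and $y\ne\al,\om$ appearing in Theorem~\ref{thm:submoeb_moeb} are automatic, and that theorem is equivalent to the assertion that $M$ is M\"obius if and only if $\de M(P)\in U$ for every $P\in\reg\cP_5$.

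Next I would promote this condition to an $\eta_5$-invariant one. Since $\reg\cP_5$ is $S_5$-stable, the requirement $\de M(P)\in U$ for all $P\in\reg\cP_5$ is equivalent to $\de M(\pi P)\in U$ for all $P\in\reg\cP_5$ and all $\pi\in S_5$. By $\eta_5$-equivariance $\de M(\pi P)=\eta_5(\pi)\de M(P)$, so this reads $\de M(P)\in\eta_5(\pi)^{-1}U$ for each $\pi$. Intersecting over $S_5$, the subspace
\[
\wh R:=\bigcap_{\pi\in S_5}\eta_5(\pi)^{-1}U\sub L_5
\]
is $\eta_5$-invariant by construction, and the M\"obius criterion becomes $\de M(\reg\cP_5)\sub\wh R$.

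Finally, Proposition~\ref{pro:symset_irreducible} identifies $\wh R$ with the irreducible $\chi^{32}$-isotypic component $R$ of $\eta_5|_{L_5}$, which yields the stated equivalence. The heavy lifting is done by Theorem~\ref{thm:submoeb_moeb} and Proposition~\ref{pro:symset_irreducible}; the one point in the present deduction that demands care is checking that regularity of $P$ really does render the side conditions in (A), (B) vacuous, so that no admissible configurations are lost when one replaces the asymmetric system (A), (B) by the $S_5$-orbit of equations defining $\wh R$.
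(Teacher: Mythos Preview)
Your argument is circular: the very statement you are asked to prove is Theorem~\ref{thm:submoeb_moeb}, yet your first sentence proposes to ``deduce the theorem from Theorem~\ref{thm:submoeb_moeb} and Proposition~\ref{pro:symset_irreducible}.'' What you have in fact sketched is a proof of the \emph{main} result, Theorem~\ref{thm:moeb_irreducible} (the characterization via the $\chi^{32}$-component $R$), taking Theorem~\ref{thm:submoeb_moeb} as input. That is essentially how the paper itself derives Theorem~\ref{thm:moeb_irreducible}: via Propositions~\ref{pro:moebius_codifferential} and~\ref{pro:symset_irreducible}, using Theorem~\ref{thm:submoeb_moeb} as a black box. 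But it is not a proof of Theorem~\ref{thm:submoeb_moeb}.

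The paper does not prove Theorem~\ref{thm:submoeb_moeb} at all; it is quoted from \cite[Theorem~3.4]{Bu16}. A genuine proof requires constructing, from a sub-M\"obius structure $M$ satisfying (A) and (B), an actual semi-metric $d$ on $X$ with $M=M_d$; nothing in your outline addresses this. As a minor additional point, even for the deduction of Theorem~\ref{thm:moeb_irreducible} you silently pass from ``all admissible $P$'' to ``all $P\in\reg\cP_5$''; in the paper this step is justified separately as Proposition~\ref{pro:submoeb_moeb_log}, which checks that the degenerate admissible 5-tuples satisfy $r_\la(\de M(P))=0$ automatically.
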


\begin{rem}\label{rem:incerti} An alternative approach to Theorem~\ref{thm:submoeb_moeb}
is discussed in \cite{IM17}. 
\end{rem}

\section{$V^n$-spaces}
\label{sect:vn_spaces}

The tensor product
$\R^{m\otimes n}=\R^m\otimes\R^n$
taken over
$\R$
is the space of
$m\times n$
real matrices. We use notation
$V^4:=\R^3$
and inductively define
$V^{n+1}=\R^{n+1}\otimes V^n$
for 
$n\ge 4$.
Then 
$V^n$
is a real vector space of dimension
$\dim V^n=\frac{n!}{8}$
whose elements could be thought as matrices with
$n$
rows each of which being an element of
$V^{n-1}$,
in particular, we have a canonical basis
$e^n=e_i\otimes e^{n-1}$
of
$V^n$,
where
$e_i$, $i=1,\dots,n$,
is the canonical basic of
$\R^n$.
We also denote by
$W^n$, $n\ge 5$,
the vector space with
$V^n=W^n\otimes V^4$.

Recall that
$L_4\sub\R^3$
is the 2-plane given by the equation
$a+b+c=0$.
We put
$L_n=W^n\otimes L_4\sub V^n$, $\ov L_n=W^n\otimes\ov L_4$,
$n\ge 5$.
Furthermore,
$\dim L_n=2\dim W^n=\frac{2}{3}\dim V^n=\frac{n!}{12}$.

\subsection{The canonical action of $S_n$ on $V^n$}
\label{subsect:canonical_action_S^n}

Given
$\pi\in S_k$
and
$i\in\{1,\dots,k\}$
we let
$\pi_i\in S_{k-1}$
be the permutation of
$\{1,\dots,\wh i,\dots,k\}$
{\em induced} by
$\pi$,
that is, the restriction of the bijection
$\pi:\{1,\dots,k\}\to\{1,\dots,k\}$
to
$\{1,\dots,\wh i,\dots,k\}$,
$\pi_i=\pi|\{1,\dots,\wh i,\dots,k\}\to\{1,\dots,\wh{\pi(i)},\dots,k\}$,
where
$\wh i$
means that the 
$i$th
box is empty. For example, if
$\pi$
is a cyclic permutation,
$\pi(j)=j+1\mod k$,
then
$\pi_i$
is cyclic for every
$1\le i\le k-1$
while
$\pi_k=\id$.

\begin{lem}\label{lem:induced_permutations} For any
$\pi'$, $\pi\in S_k$
we have
$$(\pi'\pi)_i=\pi_{\pi(i)}'\circ\pi_i.$$
\end{lem}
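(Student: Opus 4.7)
The plan is to prove the identity by unwinding the definition of induced permutations and checking equality pointwise. The main conceptual point is that $\pi_i$ is best viewed as the honest restriction of the bijection $\pi:\{1,\dots,k\}\to\{1,\dots,k\}$ to the subset $\{1,\dots,\wh i,\dots,k\}$, whose image equals $\{1,\dots,\wh{\pi(i)},\dots,k\}$; composition of such restrictions then behaves in the obvious functorial manner, and that is essentially all that is being asserted.

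First I would verify that both sides of the claimed equation are bijections between the same pair of sets, so that the identity makes sense as an equality of functions. The left-hand side $(\pi'\pi)_i$ goes from $\{1,\dots,\wh i,\dots,k\}$ to $\{1,\dots,\wh{(\pi'\pi)(i)},\dots,k\}$ by definition. On the right-hand side, $\pi_i$ sends $\{1,\dots,\wh i,\dots,k\}$ to $\{1,\dots,\wh{\pi(i)},\dots,k\}$, which coincides with the domain of $\pi'_{\pi(i)}$; the latter then sends its domain to $\{1,\dots,\wh{\pi'(\pi(i))},\dots,k\}$. Hence the composition $\pi'_{\pi(i)}\circ\pi_i$ is a well-defined bijection with the same source and target as $(\pi'\pi)_i$.

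For the pointwise equality, fix $j\in\{1,\dots,\wh i,\dots,k\}$. Then
$(\pi'\pi)_i(j)=(\pi'\pi)(j)=\pi'(\pi(j))=\pi'(\pi_i(j))=\pi'_{\pi(i)}(\pi_i(j)),$
where in the last step I use that $\pi_i(j)=\pi(j)\neq\pi(i)$ (since $j\neq i$ and $\pi$ is injective), so $\pi_i(j)$ lies in the domain $\{1,\dots,\wh{\pi(i)},\dots,k\}$ of $\pi'_{\pi(i)}$, on which $\pi'_{\pi(i)}$ agrees with $\pi'$ by definition of the induced permutation. I do not anticipate any genuine obstacle here; the only thing requiring care is the bookkeeping of sources and targets, since $\pi_i$ is not literally an element of $S_{k-1}$ in the sense of permutations of $\{1,\dots,k-1\}$ but rather a bijection between two specified $(k-1)$-element subsets of $\{1,\dots,k\}$, and the subscript on the right-hand side must be $\pi(i)$ (not $i$) precisely because the codomain of $\pi_i$ omits $\pi(i)$ rather than $i$.
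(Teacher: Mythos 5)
Your proof is correct and follows essentially the same route as the paper: both arguments first check that the two sides are well-defined bijections with the same source $\{1,\dots,\wh i,\dots,k\}$ and target $\{1,\dots,\wh{\pi'\pi(i)},\dots,k\}$, and then conclude pointwise from $(\pi'\pi)(j)=\pi'(\pi(j))$ since each induced permutation is just a restriction of the original bijection. Your write-up is slightly more explicit about why $\pi_i(j)$ lands in the domain of $\pi'_{\pi(i)}$, but the substance is identical.
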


\begin{proof} The right hand side of the equality is well defined
because
$\pi_i$
is a bijection from
$\{1,\dots,\wh i,\dots,k\}$
to
$\{1,\dots,\wh{\pi(i)},\dots,k\}$.
Furthermore, the both sides are well defined bijections from 
$\{1,\dots,\wh i,\dots,k\}$
to
$\{1,\dots,\wh{\pi'\pi(i)},\dots,k\}$.
Thus they coincide because
$\pi'(\pi(j))=(\pi'\pi)(j)$
for every
$j\in\{1,\dots,k\}$.
\end{proof}

Let
$e^n$
be the canonical basis of
$V^n$, $n\ge 4$.
Then
$e^{n+1}=e_i\otimes e^n$, 
where
$e_i$, $i=1,\dots,n+1$,
is the canonical basic of
$\R^{n+1}$.

Let
$\eta_4:S_4\to\aut(V^4)$
be the signed cross-ratio homomorphism,
$$\eta_4(\pi)(e_j)=\sign(\pi)e_{\phi(\pi)(j)},\quad j=1,2,3,$$
where
$\pi\in S_4$
and
$\phi:S_4\to S_3$
is the cross-ratio homomorphism. Now we define an action 
$\eta_{n+1}$
of
$S_{n+1}$
on
$V^{n+1}$
inductively by
$$\eta_{n+1}(\pi)(e^{n+1})=\set{e_{\pi(i)}\otimes\eta_n(\pi_i)e^n}{$i=1,\dots,n+1$}$$
for 
$\pi\in S_{n+1}$, $n\ge 4$.

\begin{exa}\label{exa:21345} Let
$\pi=21345\in S_5$.
Then 
$\pi_1=\pi_2=\id\in S_4$, $\pi_i=2134\in S_4$
for 
$i=3,4,5$.
For the cross-ratio homomorphism
$\phi:S_4\to S_3$
we have
$\phi(\pi_1)=\phi(\pi_2)=\id\in S_3$, $\phi(\pi_i)=132\in S_3$
for 
$i=3,4,5$.
Thus for 
$v\in V_5$,
$$v=\begin{pmatrix}
    a_1 &b_1 &c_1\\
    a_2 &b_2 &c_2\\
    a_3 &b_3 &c_3\\
    a_4 &b_4 &c_4\\
    a_5 &b_5 &c_5
    \end{pmatrix},$$
we have
$$\eta_5(\pi)(v)=\begin{pmatrix}
    a_2 &b_2 &c_2\\
    a_1 &b_1 &c_1\\
    -a_3 &-c_3 &-b_3\\
    -a_4 &-c_4 &-b_4\\
    -a_5 &-c_5 &-b_5
    \end{pmatrix}.$$
\end{exa}

\begin{lem}\label{lem:5-tuple_homomorphism} For every
$n\ge 4$
the map 
$\eta_n:S_n\to\aut(V^n)$
is a homomorphism, which is a monomorphism for 
$n\ge 5$.
\end{lem}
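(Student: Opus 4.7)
The plan is to induct on $n$, using at each step the composition rule $(\pi'\pi)_i=\pi'_{\pi(i)}\circ\pi_i$ supplied by Lemma~\ref{lem:induced_permutations}.

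For the base case $n=4$, I would verify $\eta_4(\pi'\pi)=\eta_4(\pi')\eta_4(\pi)$ directly on basis vectors $e_j\in V^4$, using that both $\phi:S_4\to S_3$ and $\sign:S_4\to\{\pm 1\}$ are group homomorphisms; the evaluation simply reads
$$\sign(\pi'\pi)\,e_{\phi(\pi'\pi)(j)}=\sign(\pi')\sign(\pi)\,e_{\phi(\pi')\phi(\pi)(j)}.$$
Note that $\ker\eta_4$ equals the Klein group $K=\ker\phi$ (because $\sign$ is trivial on $K$), which is precisely why the monomorphism claim is asserted only from $n\ge 5$ onward.

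For the inductive step, assume $\eta_n$ is a homomorphism and evaluate the candidate identity for $\eta_{n+1}$ on a basis element $e_i\otimes v$ of $V^{n+1}$, with $v$ a basis vector of $V^n$. Unwinding the recursive definition gives
$$\eta_{n+1}(\pi')\eta_{n+1}(\pi)(e_i\otimes v)=e_{\pi'\pi(i)}\otimes\eta_n(\pi'_{\pi(i)})\eta_n(\pi_i)(v),$$
whereas
$$\eta_{n+1}(\pi'\pi)(e_i\otimes v)=e_{\pi'\pi(i)}\otimes\eta_n((\pi'\pi)_i)(v).$$
Lemma~\ref{lem:induced_permutations} identifies $(\pi'\pi)_i$ with $\pi'_{\pi(i)}\circ\pi_i$, and the inductive hypothesis then turns $\eta_n$ of this composition into the product in the first display, so the two coincide.

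For the monomorphism statement with $n\ge 5$, I would argue as follows: if $\eta_n(\pi)=\id$, then applied to a basis vector $e_i\otimes v$ of $V^n$ with $v\ne 0$ it yields $e_{\pi(i)}\otimes\eta_{n-1}(\pi_i)(v)=e_i\otimes v$, and linear independence of $e_1,\dots,e_n$ in $\R^n$ forces $\pi(i)=i$ for every $i$, hence $\pi=\id$. No genuine obstacle is expected; the argument is pure bookkeeping once Lemma~\ref{lem:induced_permutations} is in hand, and the faithfulness at $n\ge 5$ is essentially read off the ``outer'' tensor factor.
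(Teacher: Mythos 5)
Your proposal is correct and follows essentially the same route as the paper: induction on $n$ with Lemma~\ref{lem:induced_permutations} supplying the composition rule for induced permutations, and faithfulness for $n\ge 5$ read off the $\R^n$ tensor factor. You merely spell out the base case and the injectivity step in slightly more detail than the paper does (which disposes of both with one-line appeals to the definition), and your remark that $\ker\eta_4$ is the Klein group is a correct and pertinent aside.
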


\begin{proof} It suffices to check that
$\eta_n(\pi'\pi)=\eta_n(\pi')\circ\eta_n(\pi)$
for any
$\pi$, $\pi'\in S_n$.
This is true for 
$n=4$
because
$\eta_4$
is a homomorphism by definition.
By induction we can assume that
$$\eta_n\left(\pi'\pi\right)e^n=\eta_n(\pi')\circ\eta_n(\pi)e^n$$
for 
$\pi$, $\pi'\in S_n$.
Using Lemma~\ref{lem:induced_permutations} we obtain
$$(\pi'\pi)_i=\pi_{\pi(i)}'\circ\pi_i$$
for 
$\pi$, $\pi'\in S_{n+1}$.
Then
$$\eta_{n+1}\left(\pi'\pi\right)e^{n+1}=
  e_{\pi'\pi(i)}\otimes\eta_n(\pi_{\pi(i)}')\eta_n(\pi_i)e^n=
  \eta_{n+1}(\pi')\left(e_{\pi(i)}\otimes\eta_n(\pi_i)e^n\right).$$
Hence
$\eta(\pi'\pi)=\eta(\pi')\circ\eta(\pi)$.
If
$\eta_n(\pi)=\id$
for
$n\ge 5$,
then
$\pi=\id$
by definition of
$\eta_n$.
\end{proof}

The action
$\eta_n$
of
$S_n$
on
$V^n$
is said to be {\em canonical}. Note that
$\eta_n$
preserves the space
$\ov L_n$.

\subsection{Decomposition of $\eta_5$ into irreducible components}
\label{subsect:decomposition_eta_5_irreducible}

In this section, we use elementary facts from the representation theory. The reader may consult
e.g. \cite{H}.

Let
$\rho$
be a linear representation of a group
$G$
in a vector space
$V$.
The function
$\chi_\rho:G\to\C$, $\chi_\rho(s)=\tr(\rho(s))$,
where
$\tr(\rho(s))$
is the trace of the linear map
$\rho(s):V\to V$,
is called the {\em character} of
$\rho$.
If
$\rho$
is irreducible, then the character
$\chi_\rho$
is {\em prime}.

The character
$\chi_\rho$
is constant on every class of conjugate elements of
$G$.
For the symmetry group
$S_n$
any class of conjugate elements is uniquely determined by
decomposition into cycles. We denote by
$1^a2^b3^c\dots$
the class in
$S_n$
having
$a$
cycles of length one,
$b$
cycles of length two,
$c$
cycles of length three etc. The number of classes is equal to the number of
subdivisions of
$n$
into integer positive summands. In its turn, the number of prime characters is equal to the number
of classes, and we denote the respective prime character of
$S_n$
by
$\chi^\la$,
where
$\la$ 
is the respective decomposition of 
$n$.
Here is the table of prime characters
of the group
$S_5$.
The first line is the list of the conjugate classes,
the second line lists the orders, i.e. the number of elements in a class.
The last line is the character of the canonical representation
$\eta_5$,
which is easily computed directly. For example, to compute
$\psi(21^3)$,
we take the transposition
$\pi=21345\in 21^3$
and note that there are precisely three elements
$e_3\otimes e_1$, $e_4\otimes e_1$, $e_5\otimes e_1$
of the canonical basis
$e^5$
of
$V^5$,
which are preserved up to the sign by
$\eta_5(\pi)$,
moreover,
$\eta_5(\pi)(v)=-v$
for all these elements
$v=e_3\otimes e_1,e_4\otimes e_1,e_5\otimes e_1$,
see Example~\ref{exa:21345}. Hence
$\psi(21^3)=-3$.
The list of the prime characters
can be found in \cite{H}.

\begin{center}
$\begin{array}{rrrrrrrr}
{\rm cycles} & 1^5 & 21^3 & 2^21 & 31^2 & 32 & 41 & 5\\
{\rm order} & 1 & 10 & 15 & 20 & 20 & 30 & 24\\
\chi^5 & 1 & 1 & 1 & 1 & 1 & 1 & 1\\
\chi^{41} & 4 & 2 & 0 & 1 & -1 & 0 & -1\\
\chi^{32} & 5 & 1 & 1 & -1 & 1 & -1 & 0\\
\chi^{31^2} & 6 & 0 & -2 & 0 & 0 & 0 & 1\\
\chi^{2^21} & 5 & -1 & 1 & -1 & -1 & 1 & 0\\
\chi^{21^3} & 4 & -2 & 0 & 1 & 1 & 0 & -1\\
\chi^{1^5} & 1 & -1 & 1 & 1 & -1 & -1 & 1\\
\psi   & 15 & -3 & 3 & 0 & 0 & -1 & 0
\end{array}$
\end{center}

A function
$f:G\to\C$
is {\em central}, if it is constant on conjugate classes.
The prime characters of a finite group
$G$
form an orthonormal basis of the central functions on 
$G$
with respect to the scalar product
$$\langle\psi,\chi\rangle=\frac{1}{g}\sum_{s\in G}\psi(s)\chi(s)^\ast,$$
where
$g=|G|$
is the order of
$G$, $\chi(s)^\ast$
is the complex conjugate of
$\chi(s)$.
We have

$$\langle\psi,\chi^\la\rangle= 
\begin{cases} 
1,\ \la=\{32\},\{2^21\},\{21^3\},\{1^5\}\\
0,\ \la=\{5\},\{41\},\{31^2\},
\end{cases}
$$
therefore the character
$\psi$
of
$\eta_5$
has the decomposition 
\begin{equation}\label{eq:decomposition_eta_5_irreducible}
\psi=\chi^{32}+\chi^{2^21}+\chi^{21^3}+\chi^{1^5} 
\end{equation}
into prime characters of
$S_5$.
The dimensions of prime components of the decomposition can be read off the 
first column of the list of prime characters, that is
$$15=5+5+4+1.$$
Thus the respective decomposition of
$\eta_5$
into irreducible representations can be described as follows.
The decomposition
$V^5=L_5\oplus L_5^\perp$
is
$\eta_5$-invariant.
The 5-dimensional subspace
$L_5^\perp\sub V^5$
orthogonal to
$L_5$
is decomposed into 1-dimensional and 4-dimensional invariant irreducible subspaces, 
which correspond to the characters 
$\chi^{1^5}$, $\chi^{21^3}$
respectively. The 1-dimensional 
$\chi^{1^5}$-subspace
is spanned by
$w=e_1-e_2+e_3-e_4+e_5$,
where
$e_i\in V^5$
has
$i$th 
row 
$(1,1,1)$
and zero the remaining rows. The 10-dimensional
$L_5$
is decomposed as
$$L_5=R\oplus R^\perp$$
into 5-dimensional irreducible subspaces, where
$R$
is a
$\chi^{32}$-subspace,
that is, the character of
$\eta_5|R$
is
$\chi^{32}$,
and
$R^\perp$
is a
$\chi^{2^21}$-subspace.

\section{Characteristic functions}
\label{sect:characteristic_functions}

\subsection{An algorithm}

Let
$Q=(a,b,c,d)$
be an ordered nondegenerate 4-tuple. The set of the opposite
pairs in
$Q$
has a natural order
$\{(ab,cd),(ac,bd),(ad,bc)\}$.
We regard an ordered nondegenerate 5-tuple
$P$
as the vertex set of a 4-simplex. Elements of every matrix
$v\in V^5$
can be labeled by 3-faces of
$P$
as follows. To every 3-face 
$P_i=P\sm i$, $i\in P$,
we associate the 
$i$th 
row of
$v$,
whose items are labeled by pairs of opposite edges of
$P_i$.
The order of the items is induced by the order of
$P_i$
as above, which in its turn is induced by the order of
$P$.

Let
$\La=\La(P)$
be the edges set of
$P$, $|\La|=10$.
With each
$\la\in\La$
we associate a function
$r_\la:V^5\to\R$
called {\em characteristic} as follows. Given 
$v\in V^5$,
we consider the rows
$v_i$, $v_j$, $v_k$
of
$v$,
where
$P\sm\la=\{i,j,k\}$,
which are labeled by the 3-faces
$P_i$, $P_j$, $P_k$.
The function
$r_\la$
only depends on items of these three rows. To obtain
$r_\la(v)$
we take those items, whose label contains
$\la$,
and take the sum of the chosen items with signs
$(-1)^{p+q+1}$,
where
$p$
is the column,
$q$
is the row of the item in the matrix formed by the rows
$v_i$, $v_j$, $v_k$.

As an example, we compute 
$r_\la(v)$
for 
$\la=25$,
where
$$v=\begin{pmatrix}
    a_1 &b_1 &c_1\\
    a_2 &b_2 &c_2\\
    a_3 &b_3 &c_3\\
    a_4 &b_4 &c_4\\
    a_5 &b_5 &c_5
    \end{pmatrix}.$$
In this case 
$i=1$, $j=3$, $k=4$
and

\begin{align*}
 v_1=&(2345,2435,2534)\\
 v_3=&(1245,1425,1524)\\
 v_4=&(1235,1325,1523),
\end{align*}
where we assume that
$P=(1,2,3,4,5)$
and use the cross-ratio labeling. The items containing
$\la=25$
are 
$2534$, $1425$, $1325$.
Hence
$$r_{25}(v)=-c_1-b_3+b_4$$
Note that Eq.(A) of Theorem~\ref{thm:submoeb_moeb}
is equivalent to
$r_{25}(v)=0$
because
$a_1+b_1+c_1=0$.
Similarly, Eq.(B) of Theorem~\ref{thm:submoeb_moeb} is equivalent to
$r_{35}(v)=0$.

It this way, we obtain the following list of the characteristic functions

\begin{align*}
r_{25}(v)&=-c_1-b_3+b_4
&r_{35}(v)&=b_1-b_2-a_4\\
r_{15}(v)&=-c_2+c_3-c_4
&r_{45}(v)&=-a_1+a_2-a_3\\
r_{12}(v)&=-a_3+a_4-a_5
&r_{13}(v)&=-a_2-b_4+b_5\\
r_{14}(v)&=b_2-b_3-c_5
&r_{23}(v)&=-a_1+c_4-c_5\\
r_{24}(v)&=b_1+c_3+b_5
&r_{34}(v)&=-c_1+c_2-a_5. 
\end{align*}

Usually, we consider the functions
$r_\la$
on the subspace
$L_5\sub V^5$,
and extend them on
$\ov L_5$,
see sect.~\ref{sect:vn_spaces}, with values in
$\R\cup\{+\infty\}\cup\{-\infty\}$. In this case we use the agreement
$(a+\infty)-(b+\infty)=0$
for any
$a$, $b\in\R$.
 
\subsection{Action of $S_5$ on characteristic functions}

The action of
$S_n$
on functions
$f:V^n\to\R$
induced by
$\eta_n$
is defined by
$$(\pi f)(x)=f(\eta_n(\pi)^{-1}x)$$
for 
$\pi\in S_n$, $x\in V^n$.

\begin{pro}\label{pro:orbit_functions} The group
$S_5$
preserves the set 
$\cR=\set{\pm r_\la}{$\la\in\La$}$
and acts on it transitively.
\end{pro}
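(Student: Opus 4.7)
The plan is to prove the stronger statement that for every $\pi\in S_5$ and every $\la\in\La$ one has $\pi r_\la=\pm r_{\pi(\la)}$, from which both parts of the proposition follow.

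First I would reinterpret the 15 matrix entries of $v\in V^5$ as being indexed by the 15 unordered pairs $\{\mu,\nu\}$ of disjoint edges of the 4-simplex $P$: the row index $i$ is the unique vertex not in $\mu\cup\nu$, and the three columns of row $i$ correspond to the three partitions of $P\sm\{i\}$ into two pairs. In these terms the definition of $r_\la$ reads: $r_\la$ is the signed sum of the three entries whose label $\{\la,\la'\}$ contains $\la$, one for each edge $\la'\in\La$ disjoint from $\la$.

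Next, the inductive definition of $\eta_{n+1}$ together with the fact that the cross-ratio homomorphism $\phi\colon S_4\to S_3$ is precisely the natural action of $S_4$ on the three partitions of a 4-set into two pairs shows that $\eta_5(\pi)$ sends the entry at label $\{\mu,\nu\}$ to the entry at label $\{\pi(\mu),\pi(\nu)\}$, up to the local sign $\sign(\pi_i)$. Since $\pi$ maps the three labels contributing to $r_\la$ bijectively onto those contributing to $r_{\pi(\la)}$, the function $\pi r_\la$ is automatically a signed combination of precisely the three entries defining $r_{\pi(\la)}$.

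The main obstacle, and the only genuine calculation, is to check that the three signs so produced combine into a single global sign, so that $\pi r_\la=\pm r_{\pi(\la)}$ rather than an inhomogeneous mixture. This is a sign-tracking verification that combines the factors $\sign(\pi_i)$ coming from $\eta_5(\pi)$ with the combinatorial signs $(-1)^{p+q+1}$ from the definition of $r_\la$. The subset of $S_5$ for which the claim holds is closed under composition and inversion by a direct computation, hence is a subgroup, so it suffices to verify the claim on a generating set. Taking for instance the transposition $(12)$ and a $5$-cycle, the action $\eta_5$ is explicit on each generator (the $(12)$-case is displayed in Example~\ref{exa:21345}); substituting the resulting matrix into each of the ten formulas for $r_\mu$ in Section~\ref{sect:characteristic_functions} confirms the claim one edge at a time, giving the $S_5$-invariance of $\cR$.

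For transitivity, observe that $S_5$ acts transitively on $\La$ via its natural action on unordered pairs from $\{1,\dots,5\}$, so every $\pm r_\la$ occurs with at least one sign in the $S_5$-orbit of $r_{12}$. To obtain both signs, a direct computation from Example~\ref{exa:21345} gives
$$\bigl((12)r_{12}\bigr)(v)=r_{12}(\eta_5((12))v)=a_3-a_4+a_5=-r_{12}(v),$$
exhibiting an element of the stabilizer of $\{1,2\}$ that sends $r_{12}$ to $-r_{12}$. Conjugating this element by any $\pi$ with $\pi(\{1,2\})=\la$ produces a sign-flipper in the stabilizer of $\la$, so both $\pm r_\la$ lie in the single $S_5$-orbit, and the action on $\cR$ is transitive.
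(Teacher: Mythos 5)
Your proof is correct and follows essentially the same route as the paper: the paper likewise observes that $r_\la$ is determined up to sign by the choice of $\la$ (so $\pi r_\la=\pm r_{\pi\la}$), notes that the transposition exchanging the two vertices of $\la$ reverses the sign of $r_\la$, and concludes transitivity from the transitivity of $S_5$ on the edge set $\La$. Your write-up is in fact more explicit about the one point the paper leaves as a bare assertion --- that the three local signs produced by $\eta_5(\pi)$ on the entries labelled by pairs of disjoint edges combine into a single global sign --- which you correctly isolate and reduce to a finite verification on a generating set.
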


\begin{proof} The function
$r_\la$
is uniquely determined up to the sign by the choice
of  
$\la$.
Thus
$\pi r_\la=\pm r_{\pi \la}$
for every
$\la\in\La$, $\pi\in S_5$.
One easily checks that the transposition
$\pi_\la\in S_5$
that permutes vertices of
$\la$
changes the sign of
$r_\la$, $\pi_\la r_\la=-r_\la$.
Since
$S_5$
acts transitively of the edge set
$\La$, 
it acts transitively on
$\cR$.
\end{proof}

\begin{pro}\label{pro:submoeb_moeb_log} A sub-M\"obius structure
$M$
on
$X$
is a M\"obius structure if and only if for every nondegenerate 5-tuple
$P=(x,y,\al,\be,\om)$
conditions (A), (B) of Theorem~\ref{thm:submoeb_moeb} are satisfied.
\end{pro}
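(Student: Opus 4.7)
The ``only if'' direction is immediate: Theorem~\ref{thm:submoeb_moeb} applies to every admissible 5-tuple, and nondegenerate 5-tuples are admissible, in particular those of the form $(x,y,\alpha,\beta,\omega)$ with $(\alpha,\beta,\omega)$ a scale triple.

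For the ``if'' direction, I assume (A), (B) hold on every nondegenerate 5-tuple $P=(x,y,\alpha,\beta,\omega)$ and aim to invoke Theorem~\ref{thm:submoeb_moeb}; hence I must verify (A), (B) on every singular admissible 5-tuple of this form, with the stated restrictions on $y$. Since $(\alpha,\beta,\omega)$ is already a scale triple, a singular admissible $P$ has a unique pair of coinciding entries, and that pair must be one of: $x=y$; $x\in\{\alpha,\beta,\omega\}$; or $y\in\{\alpha,\beta,\omega\}$. This yields a short, finite list of configurations.

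The crux is an automatic cancellation: for each such singular $P$, the sub-M\"obius axioms (a), (c) alone pin down $M(P_i)$ on every singular face $P_i$ as one of the infinitely remote points $A$, $B$, $C$ of $\ov L_4$. Indeed, axiom (c) handles the normalized situation where the equal entries of $P_i$ occupy positions $1$ and $2$, and axiom (a) with a suitable transposition $\pi\in S_4$ transports this to the general position, the image in $\{A,B,C\}$ being read off from $\sign(\pi)\phi(\pi)$. Substituting the resulting values of $a(P_i),b(P_i),c(P_i)$ into (A) or (B), the two nonfinite terms cancel via the convention $(a+\infty)-(b+\infty)=0$, and the remaining finite identity reduces in each case either to a tautology ($0=0$) or to the defining relation $a(Q)+b(Q)+c(Q)=0$ on a regular face $Q\in\reg\cP_4$, which holds because $M(Q)\in L_4$. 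Thus (A), (B) are satisfied on every singular admissible $P$ purely by virtue of $M$ being sub-M\"obius, independently of the hypothesis, and Theorem~\ref{thm:submoeb_moeb} applies.

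The main obstacle is the bookkeeping of signs: one must cleanly enumerate the handful of singular configurations, identify for each which of $P_1,\dots,P_5$ are singular, compute $\sign(\pi)\phi(\pi)(A)$ for the relevant transposition, and verify the collapse. The work is mechanical but sign-sensitive. The transitivity of $S_5$ on the set of characteristic functions $\{\pm r_\lambda\}$ from Proposition~\ref{pro:orbit_functions} eases this burden by letting one treat analogous degeneracies uniformly, reducing the analysis to a few representative cases.
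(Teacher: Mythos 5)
Your proposal is correct and follows essentially the paper's route: both directions reduce via Theorem~\ref{thm:submoeb_moeb} to showing that (A), (B) hold automatically on singular admissible 5-tuples, with the values of $M$ on singular faces pinned down by axioms (a), (c) and the infinities handled by the convention $(a+\infty)-(b+\infty)=0$. One organizational difference is worth noting: the paper uses the transitivity of Proposition~\ref{pro:orbit_functions} to normalize every singular tuple to the single configuration $x=y$ and then verifies all ten equations $r_\la=0$ there, where $P_1=P_2$ and $P_3,P_4,P_5$ all carry the value $(0,\infty,-\infty)$, so no comparison between distinct regular faces ever occurs. In your unnormalized case analysis the residual finite identity is not always ``a tautology or $a+b+c=0$ on one regular face'': for instance, (B) for $P=(x,\be,\al,\be,\om)$ reduces to $b(x,\al,\be,\om)=-a(x,\be,\al,\om)$, which compares two \emph{different} regular faces and requires axiom (a) (the signed $\phi$-equivariance) rather than membership in $L_4$. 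So your stated dichotomy needs that third alternative; with it added, the mechanical verification goes through, and the paper's normalization is simply the cleaner way to avoid it.
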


\begin{proof} We fix a scale triple
$A=(\al,\be,\om)\in X^3$
and consider an admissible 5-tuple
$P=(x,y,A)$.
By Theorem~\ref{thm:submoeb_moeb} we only need to show that if
$P$
is degenerate, then (A), (B) are satisfied automatically. 

Note that
$r_{25}(M(P))=0$
is equivalent to (A), and
$r_{35}(M(P))=0$
is equivalent to (B).
Thus using Proposition~\ref{pro:orbit_functions} it suffices to check that if
$x=y$,
then
$r_\la(M(P))=0$
for every
$\la\in\La$.
Note that in this case
$x=y\not\in A$. 

We have
$M(P_1)=M(P_2)=(a,b,c)\in L_4$,
while
$M(P_3)=M(P_4)=M(P_5)=(0,\infty,-\infty)$
by definition of a sub-M\"obius structure. One directly checks that
$r_\la(M(P))=0$
for 
$\la=35,45,12,34$
using that these equations do not involve
$\pm\infty$.

The equation
$r_{25}(v)=0$
is equivalent to
$b_1+b_4=-a_1+b_3$.
In the case
$v=M(P)$
the both sides are equal to
$\infty$,
thus
$r_{25}(M(P))=0$
according to our agreement. Similar argument shows that 
$r_\la(M(P))=0$
for the remaining
$\la=15,13,14,23,24$,
hence
$r_\la(M(P))=0$
for all
$\la\in\La$.
\end{proof}

The functions
$r_\la$, $\la\in\La$,
on
$V^5$
are the characteristic functions for M\"obius structures. A point
$a\in L_5\sub V^5$
is said to be {\em root} of
$r_\la$
if
$r_\la(a)=0$.
For roots in
$L_5$
we use notation
$$R_\la=\set{a\in L_5}{$r_\la(a)=0$},$$
and
$\wh R=\cap_\la R_\la\sub L_5$.
By Proposition~\ref{pro:orbit_functions}, the 
$\eta_5$-action 
of
$S_5$
on
$L_5$
permutes the sets
$R_\la$, $\la\in\La$,
and
$\wh R$
is
$\eta$-invariant.
We call
$\wh R$
the {\em symmetry} set.

\section{Codifferential of a sub-M\"obius structure}

\subsection{Definition of a codifferential}
\label{subsect:def_codiff}

Recall that
$\ov L_4=L_4\cup\{A,B,C\}$,
see sect.~\ref{subsect:submoeb_structures}. For 
$n\ge 5$
we put 
$\ov L_n=W^n\otimes\ov L_4$,
see sect.~\ref{sect:vn_spaces}.

Let
$M$
be a sub-M\"obius structure on a set 
$X$.
We define its {\em codifferential} as the map
$\de M:\cP_5\to\ov L_5$
given by
$$\de M(P)_i=M(P_i)\in\ov L_4,$$
where 
$\de M(P)_i$
is the
$i$th
row of
$\de M(P)$, $i=1,\dots,5$.
This terminology is related to the fact that
$\de M$
is computed on a 5-tuple 
$P\in\cP_5$
via values of
$M$
on the ``boundary''
$dP=P_1\cup\dots\cup P_5$.

\begin{lem}\label{lem:codiff_5_submoebius} Codifferential of any
sub-M\"obius structure
$M$
on a set 
$X$
has the following properties
\begin{itemize}
 \item [(i)] $\de M$
is equivariant with respect to the homomorphism
$\eta_5$,
i.e.
$$\de M(\pi P)=\eta_5(\pi)\de M(P)$$
for every
$P\in\cP_5$
and every
$\pi\in S_5$;
 \item [(ii)] $\de M(P)\in L_5$
if and only if
$P$
is nondegenerate, $P\in\reg\cP_5$;
 \item [(iii)] for every admissible 5-tuple
$P=(x,x,\al,\be,\om)$
the equality
$$\de M_i(P)=\begin{cases} 
 (a,b,c)\in L_4,&i=1,2\\
 (0,\infty,-\infty),&i=3,4,5
\end{cases}$$
holds.
\end{itemize}
\end{lem}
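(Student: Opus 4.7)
The plan is to reduce each of (i), (ii), (iii) to the corresponding defining axiom (a), (b), (c) of a sub-M\"obius structure, combined with the recursive description of $V^5$, $L_5$, and $\eta_5$ as iterated tensor products over $V^4$.

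For (i) the key observation is the compatibility of the ``remove the $i$-th entry'' operation with the $S_5$-action: if $j=\pi^{-1}(i)$, then
\[(\pi P)_i=\pi_j\, P_j,\]
where $\pi_j\in S_4$ is the induced permutation from sect.~\ref{subsect:canonical_action_S^n}. This is a direct index-chase from the definitions of $\pi P$ and of $\pi_j$. Given this, axiom (a) applied to $M$ on the 4-tuple $P_j$, together with the identity $\eta_4(\pi_j)=\sign(\pi_j)\phi(\pi_j)$ built into the definition of $\eta_4$, yields
\[\de M(\pi P)_i = M(\pi_j P_j) = \eta_4(\pi_j) M(P_j) = \eta_4(\pi_j)\,\de M(P)_j.\]
Unraveling the inductive definition $\eta_5(\pi)(e_j\otimes v)=e_{\pi(j)}\otimes\eta_4(\pi_j)v$ shows that the $i$-th row of $\eta_5(\pi)\de M(P)$ is precisely $\eta_4(\pi_j)\de M(P)_j$, so the two sides agree row by row.

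Parts (ii) and (iii) are immediate from the remaining axioms. For (ii), since $L_5=W^5\otimes L_4$, an element of $V^5$ lies in $L_5$ iff each of its five rows lies in $L_4$; by axiom (b), $M(P_i)\in L_4$ iff $P_i\in\reg\cP_4$, and this holds for every $i$ precisely when $P\in\reg\cP_5$. For (iii), the form $P=(x,x,\al,\be,\om)$ combined with admissibility and the displayed conclusion forces $\al,\be,\om$ to be pairwise distinct and each distinct from $x$; then $P_1=P_2=(x,\al,\be,\om)\in\reg\cP_4$ gives $M(P_1)=M(P_2)\in L_4$, while for $i=3,4,5$ the 4-tuple $P_i$ has its first two entries both equal to $x$, and axiom (c) yields $M(P_i)=(0,\infty,-\infty)$ directly.

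The lemma is essentially a compatibility dictionary between the sub-M\"obius axioms on admissible 4-tuples and the codifferential construction on admissible 5-tuples, and I do not anticipate a substantive obstacle. The one point needing careful attention is the index bookkeeping $(\pi P)_i=\pi_j P_j$ in part (i), which is a direct consequence of the definition of induced permutation and is consistent with Lemma~\ref{lem:induced_permutations}.
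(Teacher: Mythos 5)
Your proposal is correct and takes essentially the same route as the paper, which simply asserts (i) "by construction" from the sub-M\"obius axioms and Lemma~\ref{lem:5-tuple_homomorphism}, derives (ii) from axiom (b) row by row, and checks (iii) exactly as you do via $P_1=P_2=(x,\al,\be,\om)\in\reg\cP_4$ and axiom (c) applied to $P_3,P_4,P_5$. Your explicit index identity $(\pi P)_i=\pi_j P_j$ with $j=\pi^{-1}(i)$ is just the detail the paper leaves implicit in its appeal to "construction".
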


\begin{proof} By construction, properties of sub-M\"obius structures 
and Lemma~\ref{lem:5-tuple_homomorphism} the map 
$\de M$
is equivariant w.r.t. the natural action of
$S_5$
on
$\cP_5$
and the action
$\eta_5$
on
$\ov L_5$,
$$\de M(\pi P)=\eta_5(\pi)\de M(P)$$
for every
$P\in\cP_5$.
This is (i).

Property~(ii) follows from the similar property for 
sub-M\"obius structures.

For 
$P=(x,x,\al,\be,\om)\in\sing\cP_5$
we have
$P_1=P_2=(x,\al,\be,\om)\in\reg(\cP_4)$,
and we put 
$M(P_1)=M(P_2)=(a,b,c)\in L_4$.
For
$P_3=(x,x,\be,\om)$, $P_4=(x,x,\al,\om)$
and
$P_5=(x,x,\al,\be)$
we have
$M(P_3)=M(P_4)=M(P_5)=(0,\infty,-\infty)$.
Thus (iii) holds.
\end{proof}

By property (ii) of Lemma~\ref{lem:codiff_5_submoebius},
$\de M(\reg\cP_5)\sub L_5$. 
Thus we can consider the restriction
$\de M|\reg\cP_5$
as a map with values in
$L_5$.

\begin{pro}\label{pro:moebius_codifferential} The following
conditions for a sub-M\"obius
structure 
$M$
on a set 
$X$
are equivalent:

\begin{itemize}
 \item [(i)] $M$
is a M\"obius structure;
 \item [(ii)] $\de M(\reg\cP_5)\sub R_\la$
for some
$\la\in\La$;
 \item [(iii)] $\de M(\reg\cP_5)\sub\wh R$.
\end{itemize}
\end{pro}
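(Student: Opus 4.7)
I will prove the cycle (iii)$\,\Rightarrow\,$(ii)$\,\Rightarrow\,$(i)$\,\Rightarrow\,$(iii). The first implication is immediate because $\wh R=\bigcap_{\la\in\La}R_\la\sub R_\la$ for every $\la$, so no work is required there. The other two reduce to Proposition~\ref{pro:submoeb_moeb_log} once I prove a single propagation statement by a direct equivariance calculation.

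The key technical step I would isolate first is: \emph{if $\de M(\reg\cP_5)\sub R_\la$ for some $\la\in\La$, then $\de M(\reg\cP_5)\sub R_\mu$ for every $\mu\in\La$.} Fix $\mu\in\La$. By Proposition~\ref{pro:orbit_functions} there exists $\pi\in S_5$ with $\pi r_\la=\pm r_\mu$. Combining the definition $(\pi f)(x)=f(\eta_5(\pi)^{-1}x)$ with the $\eta_5$-equivariance $\de M(\pi^{-1}P)=\eta_5(\pi)^{-1}\de M(P)$ from Lemma~\ref{lem:codiff_5_submoebius}(i), I compute for any $P\in\reg\cP_5$
\[ r_\mu(\de M(P))=\pm(\pi r_\la)(\de M(P))=\pm r_\la(\de M(\pi^{-1}P))=0,\]
since $\pi^{-1}P\in\reg\cP_5$ and by hypothesis $r_\la$ vanishes on $\de M(\reg\cP_5)$.

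With this in hand, the remaining implications are short. Proposition~\ref{pro:submoeb_moeb_log} states that $M$ is M\"obius iff conditions (A), (B) of Theorem~\ref{thm:submoeb_moeb} hold on every nondegenerate 5-tuple, and the discussion preceding that proposition identified (A) with $r_{25}\circ\de M=0$ and (B) with $r_{35}\circ\de M=0$ on $\reg\cP_5$. Hence for (ii)$\,\Rightarrow\,$(i): the propagation step applied once with $\mu=25$ and once with $\mu=35$ delivers (A) and (B), so $M$ is M\"obius. For (i)$\,\Rightarrow\,$(iii): $M$ being M\"obius yields $\de M(\reg\cP_5)\sub R_{25}$ by (A), and the propagation step promotes this to $\de M(\reg\cP_5)\sub R_\mu$ for every $\mu\in\La$, i.e.\ to $\de M(\reg\cP_5)\sub\wh R$.

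I do not expect a genuine obstacle: the entire argument rests on Propositions~\ref{pro:submoeb_moeb_log} and~\ref{pro:orbit_functions} together with the $\eta_5$-equivariance of $\de M$. The only point to check carefully is the sign/inverse bookkeeping that turns the action on functions $(\pi r_\la)(x)=r_\la(\eta_5(\pi)^{-1}x)$ into the argument shift $r_\la(\de M(\pi^{-1}P))$ via equivariance; this is a one-line verification.
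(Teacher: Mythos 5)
Your proof is correct and follows essentially the same route as the paper's: both arguments rest on Proposition~\ref{pro:orbit_functions} combined with the $\eta_5$-equivariance of $\de M$ to propagate vanishing of one $r_\la$ to all of them, and on Proposition~\ref{pro:submoeb_moeb_log} to pass between the M\"obius property and the conditions $r_{25}=r_{35}=0$ on $\reg\cP_5$. The only differences are cosmetic: you orient the cycle as (iii)$\Rightarrow$(ii)$\Rightarrow$(i)$\Rightarrow$(iii) rather than (i)$\Rightarrow$(ii)$\Rightarrow$(iii)$\Rightarrow$(i), and you spell out the inverse/equivariance bookkeeping that the paper leaves implicit.
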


\begin{proof} By Theorem~\ref{thm:submoeb_moeb} we have
$\de M(\reg\cP_5)\sub R_{25}\cap R_{35}$
for a M\"obius structure 
$M$. 
It follows from Proposition~\ref{pro:orbit_functions} that
$\de M(\reg\cP_5)\sub R_\la$
for every
$\la\in\La$,
in particular,
$(i)\Longrightarrow(ii)$.

$(ii)\Longrightarrow(iii)$: 
Assume 
$\de M(\reg\cP_5)\sub R_\la$
for a sub-M\"obius structure
$M$
and some
$\la\in\La$.
Using
$\eta_5$-equivariance
of
$\de M$,
see Lemma~\ref{lem:codiff_5_submoebius}(i),
we obtain that
$\de M(\reg\cP_5)\sub R_\la$
for every
$\la\in\La$,
thus
$\de M(\reg\cP_5)\sub\wh R$.

$(iii)\Longrightarrow(i)$:
If
$\de M(\reg\cP_5)\sub\wh R$,
then in particular
$\de M(\reg\cP_5)\sub R_{25}\cap R_{35}$.
Thus
$M$
is M\"obius by Proposition~\ref{pro:submoeb_moeb_log}.
\end{proof}

\subsection{Proof of the main theorem}
\label{subsect:proof_main}

\begin{pro}\label{pro:symset_irreducible} The symmetry set 
$\wh R\sub L_5$
coincides with the 
$\chi^{32}$-subspace $R$
of the decomposition
$L_5=R\oplus R^\perp$
of
$\eta_5$
into irreducible components,
$\wh R=R$.
\end{pro}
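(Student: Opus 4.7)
The plan rests on two structural observations. First, $\wh R$ is $\eta_5$-invariant: Proposition~\ref{pro:orbit_functions} shows that $S_5$ permutes the family $\{\pm r_\la\}_{\la\in\La}$, hence permutes the hyperplanes $R_\la$ in $L_5$, and so preserves their intersection $\wh R$. Second, $L_5=R\oplus R^\perp$ is a decomposition into two non-isomorphic irreducible components (characters $\chi^{32}$ and $\chi^{2^21}$), so every $\eta_5$-invariant subspace of $L_5$ is one of the four subspaces $\{0\}$, $R$, $R^\perp$, $L_5$. The proposition reduces to choosing among these four.

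To pin down the correct one in a single stroke, I would compute $\dim\wh R$ together with the trace of $\eta_5(\pi)|_{\wh R}$ for a $\pi$ in a conjugacy class where $\chi^{32}$ and $\chi^{2^21}$ differ. The transposition class $21^3$ is ideal, since $\chi^{32}(21^3)=1$ while $\chi^{2^21}(21^3)=-1$. I would take $\pi=21345$ from Example~\ref{exa:21345}. Because $\eta_5(\pi)$ is an involution, split $L_5=L_5^+\oplus L_5^-$ into its $\pm 1$-eigenspaces. Using the explicit formula of Example~\ref{exa:21345}, both $L_5^{\pm}$ admit natural 5-parameter descriptions: $L_5^+$ consists of matrices whose rows $1$ and $2$ coincide (two parameters in $L_4$) and whose rows $3,4,5$ have the form $(0,t,-t)$ (one parameter each); $L_5^-$ consists of matrices with row $2=-$ row $1$ and with rows $3,4,5$ each proportional to $(-2,1,1)$.

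Substituting these two parametrizations into the ten characteristic functions listed in Section~\ref{sect:characteristic_functions} produces, after cancellations forced by the built-in $S_5$-symmetry, just two independent linear relations on $L_5^+$ and three on $L_5^-$. This yields $\dim(\wh R\cap L_5^+)=3$ and $\dim(\wh R\cap L_5^-)=2$, whence $\dim\wh R=5$ (ruling out $\{0\}$ and $L_5$) and $\tr(\eta_5(\pi)|_{\wh R})=3-2=1=\chi^{32}(21^3)$ (ruling out $R^\perp$). Hence $\wh R=R$.

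The only real labor is the bookkeeping for the two substitutions into the ten $r_\la$, but because the system has so much built-in $\eta_5$-symmetry, the dependencies appear immediately and the ranks come out with minimal computation. Everything else is purely structural: invariance, irreducibility, and a single trace. An alternative to the $L_5^\pm$ decomposition is to exhibit one nonzero $\de M_d(P)\in\wh R$ arising from a semi-metric on a $5$-point set (using Proposition~\ref{pro:moebius_codifferential}) to show $\wh R\neq\{0\}$, and then apply the same trace argument; but the eigenspace route proves the full dimension count simultaneously with the character computation.
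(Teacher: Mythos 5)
Your argument is correct, and the two rank counts it hinges on do come out as you claim: on $L_5^+$ the ten relations collapse to $-c-t_3+t_4=0$, $-a-t_4+t_5=0$, $b-t_3+t_5=0$, of which the third is the sum of the first two modulo $a+b+c=0$ (rank $2$, so $\dim(\wh R\cap L_5^+)=3$), while on $L_5^-$ they collapse to $a=t_3$, $b=-t_4$, $t_3-t_4+t_5=0$ with everything else a consequence (rank $3$, so $\dim(\wh R\cap L_5^-)=2$); hence $\dim\wh R=5$ and $\tr\bigl(\eta_5(\pi)|\wh R\bigr)=1=\chi^{32}(21^3)$. The skeleton — invariance of $\wh R$, the four-element lattice of invariant subspaces of $L_5=R\oplus R^\perp$, and a trace on a transposition to separate $\chi^{32}$ from $\chi^{2^21}$ — is the same as in the paper, but your execution of the decisive computation is genuinely different. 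The paper works in the orthogonal complement $\wh R^\perp\sub L_5$, spanned by the projections $n_\la$ of the normals to the hyperplanes $r_\la=0$: for the transposition $15342$ it finds four $n_\la$ that are $(-1)$-eigenvectors spanning a $3$-dimensional subspace, so the trace on the $5$-dimensional $\wh R^\perp$ cannot be $+1$ and must be $-1$. That route avoids solving the full linear system, but it needs two extra inputs: $\wh R\neq L_5$ (clear, as an intersection of hyperplanes) and $\wh R\neq\{0\}$, which the paper gets by invoking a nontrivial M\"obius structure on $\wh\R$ whose codifferential lands in $\wh R$ via Proposition~\ref{pro:moebius_codifferential}. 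Your eigenspace computation delivers the dimension and the trace in one stroke, with no appeal to M\"obius geometry, at the cost of substituting the two $5$-parameter families into all ten $r_\la$; it also yields $\dim\wh R=5$ directly, which the paper only obtains a posteriori from $\wh R=R$.
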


\begin{proof} The set 
$\wh R$
is the intersection of hyperplanes
$R_\la$, $\la\in\La$,
thus
$\wh R\neq L_5$.
By Proposition~\ref{pro:moebius_codifferential}, we have
$\de M(\reg\cP_5)\sub\wh R$
for any M\"obius structure
$M$
on any set 
$X$.
Taking e.g. the M\"obius structure 
$M$ 
of the extended real line
$\wh\R=\R\cup\{\infty\}$, 
we see that
$\de M(\reg\cP_5)\neq\{0\}$.
This shows that
$\wh R$
is a proper subspace of
$L_5$.
Since
$\wh R$
is
$\eta_5$-invariant,
we conclude that
$\wh R$
coincides with
$R$
or
$R^\perp$.
Note that the characters
$\chi^{32}$, $\chi^{2^21}$
differ on the class 
$21^3$,
i.e., on any transposition. To make a choice, we compute
$\tr(\eta_5(\pi))$
for 
$\pi=15342\in 21^3$
on the orthogonal complement 
$\wh R^\perp$
to
$\wh R$.
It is spanned by the normal vectors
$n_\la$, $\la\in\La$,
to the hyperplanes
$r_\la=0$
in
$L_5$. 

For the transposition
$\pi=15342$
we have 
$$\eta_5(\pi)(w)=\begin{pmatrix}
                 -b_1 &-a_1 &-c_1\\
                 b_5 &c_5 &a_5\\
                 -c_3 &-b_3 &-a_3\\
                 -c_4 &-b_4 &-a_4\\
                 c_2 &a_2 &b_2
                \end{pmatrix}\quad\text{for any}\quad
w=\begin{pmatrix}
                 a_1 &b_1 &c_1\\
                 a_2 &b_2 &c_2\\
                 a_3 &b_3 &c_3\\
                 a_4 &b_4 &c_4\\
                 a_5 &b_5 &c_5
                \end{pmatrix}.$$

The hyperplane
$r_{25}=0$
in
$V^5$
is given by the equation
$-c_1-b_3+b_4=0$
with the normal vector 
$$v_{25}=\begin{pmatrix}
    0 & 0 & -1\\
    0 & 0 & 0\\
    0 & -1 & 0\\
    0 & 1 & 0\\
    0 & 0 & 0
    \end{pmatrix}\in V^5.$$
We take as
$n_{25}$
the projection of
$3v_{25}$
to 
$L_5$,
$$n_{25}=\begin{pmatrix}
    1 & 1 & -2\\
    0 & 0 & 0\\
    1 & -2 & 1\\
    -1 & 2 & -1\\
    0 & 0 & 0
    \end{pmatrix}.$$
Similarly, we obtain all ten vectors
$n_\la\in\wh R^\perp$, $\la\in\La$.

Then
$\eta_5(\pi)(n_\la)=-n_\la$
for 
$\la=25,13,14,34$,
and there is no any other
$\la\in\La$
such that
$\eta_5(\pi)(n_\la)=\pm n_\la$.
The four vectors
$n_\la$, $\la=25,13,14,34$,
are linearly depended,
$n_{25}+n_{13}=n_{14}+n_{34}$,
and they span a 3-dimensional subspace in
$\wh R^\perp$
on which
$\eta_5(\pi)$
acts as
$-\id$.
Therefore,
$\tr(\eta_5(\pi))|\wh R^\perp$,
which must be
$\pm 1$,
cannot be equal to
$1$,
hence
$\tr(\eta_5(\pi))=-1=\chi^{2^21}(\pi)$.
This shows that
$\wh R^\perp=R^\perp$
and thus
$\wh R=R$.
\end{proof}

Theorem~\ref{thm:moeb_irreducible} now follows from Propositions~\ref{pro:moebius_codifferential}
and ~\ref{pro:symset_irreducible}.

\bigskip
\begin{tabbing}

Sergei Buyalo\\

St. Petersburg Dept. of Steklov Math. Institute RAS,\\ 
Fontanka 27, 191023 St. Petersburg, Russia\\
{\tt sbuyalo@pdmi.ras.ru}\\

\end{tabbing}


\begin{thebibliography}{ABC}

\bibitem[Bu16]{Bu16} S.~Buyalo, M\"obius and sub-M\"obius structures, 
Algebra i analys, 28 (2016), n.5, 1--20, translation in St. Petersburg Math. J. 29 (2018), 
no. 5, 715--747.

\bibitem[H]{H} M.~Hamermesh, Group theory and its application to physical problems. 
Addison-Wesley Series in Physics Addison-Wesley Publishing Co.,Inc.,Reading, 
Mass.-London 1962 xv+509 pp. MR0136667  

\bibitem[IM17]{IM17} M.~Incerti-Medici, Geometric structure of M\"obius spaces,
arXive:1706.10166v1 [math.MG], 2017.


\end{thebibliography}
\end{document}